\newtheorem{theorem}{Theorem}
\newtheorem{lemma}[theorem]{Lemma}
\newcommand{\R}         {\mathbb{R}}
\renewcommand{\le}         {\leqslant}
\renewcommand{\ge}         {\geqslant}
\renewcommand{\geq}         {\geqslant}
\renewcommand{\div}         {\,{\rm{div}}}
\newcommand{\Ric}         {\,{\rm{Ric}}}
\begin{document}

\title[Stable solutions on manifolds]{
Stable solutions of elliptic equations \\ on Riemannian 
manifolds}

\author{Alberto Farina, Yannick Sire
and Enrico Valdinoci}\thanks{
{\it AF}:
LAMFA -- CNRS UMR 6140 --
Universit\'e de Picardie Jules Verne --
Facult\'e de Math\'ematiques et d'Informatique --
33, rue Saint-Leu -- Amiens, France --
{\tt alberto.farina@u-picardie.fr} \\
{\it YS}:
Universit\'e Aix-Marseille 3, Paul C\'ezanne --
LATP --
Marseille, France --
{\tt sire@cmi.univ-mrs.fr}\\
{\it EV}:
Universit\`a di Roma Tor Vergata --
Dipartimento di Matematica --
Rome, Italy --
{\tt enrico.valdinoci@uniroma2.it}
}

\begin{abstract}
This paper is devoted to the study of rigidity properties for special solutions of nonlinear 
elliptic partial differential equations on smooth, boundaryless Riemannian manifolds. 
As far as stable solutions are concerned, we derive a new weighted Poincar\'e inequality
which allows to prove Liouville type results and the flatness of the level sets of the solution in dimension $2$, under 
suitable geometric assumptions on the ambient manifold.     
\end{abstract}
\maketitle
\tableofcontents
\section*{Notation}

Throughout this paper, $M$ will denote a complete,
connected,
smooth, $n$-dimensional,
manifold without boundary,
endowed with a smooth Riemannian metric $g=\{ g_{ij}\}$.

As customary, we consider the volume term induced by $g$,
that is, in local coordinates,
\begin{equation}\label{vg}
dV_g = \sqrt{|g|}\,dx^1\wedge \dots \wedge dx^n,
\end{equation}
where $\{ dx^1,\dots,dx^n\}$ is the basis of $1$-forms dual to the vector basis $\{\partial_i,\dots,\partial_n\}$,
and $|g|=\det(g_{ij})\ge0$.

We denote by $\div_g X$ the divergence of a smooth vector
field $X$ on $M$, that is, in local coordinates
$$ \div_g X=\frac{1}{\sqrt{|g|}} \partial_i \Big( \sqrt{|g|}X^i\Big),$$
where the Einstein summation convention is understood.

We also denote by $\nabla_g$ the Riemannian gradient
and by $\Delta_g$ the Laplace-Beltrami operator, that is,
in local coordinates,
$$ (\nabla_g\phi)^i=g^{ij}\partial_j \phi$$
and
\begin{equation}\label{Delta}
\Delta_g \phi= \div_g(\nabla_g \phi)
=
\frac{1}{\sqrt{|g|}}\partial_i \Big(
{\sqrt{|g|}} g^{ij}\partial_j \phi\Big)
,\end{equation}
for any smooth function $\phi:M\rightarrow \R$.

Due to this divergence structure (see, for example,
page~184 of~\cite{Gallot}), we have that
\begin{equation}\label{DIV}
\int_M \phi \Delta_g \psi \,dV_g = -\int_M \langle
\nabla_g \phi,\nabla_g \psi
\rangle\,dV_g,\end{equation}
for any
smooth~$\phi$, $\psi:M\rightarrow\R$, with either~$\phi$
or~$\psi$ compactly supported,
where $\langle \cdot,\cdot\rangle$ is the scalar product
induced by $g$ (no confusion should arise with the standard
Euclidean dot product).

In fact, by approximation, we have that~\eqref{DIV}
also holds when~$\phi$ is compactly supported and
Lipschitz continuous with respect to the metric structure
induced by~$g$.

Given a vector field $X$, we also denote
$$ |X|=\sqrt{\langle X,X\rangle}.$$

Also (see, for instance Definition~3.3.5 in~\cite{Jost}),
it is customary to define
the Hessian of a smooth function $\phi$ as
the symmetric $2$-tensor given in a local patch by
$$ (H_\phi)_{ij}=\partial^2_{ij}\phi-\Gamma^k_{ij}\partial_k\phi,$$
where $\Gamma^k_{ij}$ are the  
Christoffel symbols, namely
$$ 
\Gamma_{ij}^k=\frac12 g^{hk} \left( \partial_i g_{hj} +\partial_j g_{ih} -\partial_h g_{ij} \right) .$$
Given a tensor $A$,
we define its norm by $|A|=\sqrt{A A^*}$, where $A^*$
is the adjoint.

The above quantities are related to the Ricci tensor $\Ric_g$
via the Bochner-Weitzenb\"ock formula (see,
for instance,~\cite{Berger, Wang} and references therein):
\begin{equation}\label{BOC}
\frac 12\Delta_g |\nabla_g \phi|^2=
|H_\phi|^2+ \langle \nabla_g \Delta_g \phi,\nabla_g\phi\rangle
+\Ric_g (\nabla_g \phi,\nabla_g\phi).\end{equation}

We say that $M$ is parabolic if for any $p\in M$
there exists a precompact
neighborhood $U_p$ of $p$ in $M$
such that
for any $\epsilon>0$ there exists $\phi_\epsilon\in C^\infty_0
(M)$ for which $\phi_\epsilon (q)=1$ for any $q\in U_p$
and
\begin{equation}\label{PAR}
\int_{M} |\nabla \phi_\epsilon|^2\,dV_g\le\epsilon.\end{equation}
We refer to~\cite{Royden, Ly, Troy} for further comments
on parabolicity.

During the course of the paper, we will often use
normal coordinates at some fixed point $p_o\in M$
(see, for example, page~93 of~\cite{Gallot}); that is
we suppose that
\begin{equation}\label{NC}
g_{ij}(p_o) = \delta_{ij}, \qquad
{\partial_k g_{ij}}(p_o) = 0, \qquad{\mbox{
and
}}\qquad \Gamma^i_{jk}(p_o) = 0.\end{equation}

This paper will deal with solutions $u\in C^2(M)$ of
\begin{equation}\label{PDE}
- \Delta_g u=f(u),\end{equation}
where $f\in C^1(\R)$.

We say that a solution $u$ is stable if
\begin{equation}\label{STA}
\int_M |\nabla_g\xi|^2 -f'(u) \xi^2\,dV_g
\ge0\end{equation}
for every~$\xi\in C^\infty_0 (M)$.

Such a stability condition is customary in the calculus
of variations (see, for
example,~\cite{Moss, FCS, AAC}),
and it states that the second variation
of the (possibly formal) energy functional
associated to \eqref{PDE} is
nonnegative (for instance, local minima
of the energy are stable solutions).

\section{Main results}

We give the following Liouville type and flatness results:

\begin{theorem}\label{T1}
Let~$M$ be a connected Riemannian manifold.

Let $u$ be a stable solution of~\eqref{PDE}.
Suppose that 
\begin{itemize}
\item either $M$
is compact 
\item or $M$ is complete and
parabolic, and $|\nabla_gu|\in 
L^\infty(M)$.
\end{itemize}
Assume also
that the Ricci curvature is nonnegative
and that $\Ric_g$ does not vanish identically.

Then $u$ is constant.
\end{theorem}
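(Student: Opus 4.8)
We have a stable solution $u$ of $-\Delta_g u = f(u)$. The standard strategy for such Liouville theorems is to use the stability inequality with a well-chosen test function related to $|\nabla_g u|$, combined with the Bochner formula. The test function should be $\xi = |\nabla_g u| \varphi$ where $\varphi$ is a cutoff.

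Key idea: differentiate the equation. If $u$ solves $-\Delta_g u = f(u)$, then applying $\nabla_g$ and pairing with $\nabla_g u$... actually, the cleaner approach: the function $w = |\nabla_g u|$ satisfies a differential inequality. From Bochner:
$$\frac{1}{2}\Delta_g |\nabla_g u|^2 = |H_u|^2 + \langle \nabla_g \Delta_g u, \nabla_g u\rangle + \text{Ric}_g(\nabla_g u, \nabla_g u).$$
Since $\Delta_g u = -f(u)$, we have $\nabla_g \Delta_g u = -f'(u)\nabla_g u$, so $\langle \nabla_g \Delta_g u, \nabla_g u\rangle = -f'(u)|\nabla_g u|^2$. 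Thus
$$\frac{1}{2}\Delta_g |\nabla_g u|^2 = |H_u|^2 - f'(u)|\nabla_g u|^2 + \text{Ric}_g(\nabla_g u, \nabla_g u).$$

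Now I want to plug something into the stability inequality. There's a known "geometric Poincaré inequality" argument: take $\xi = \varphi |\nabla_g u|$ in the stability inequality. Then
$$\int_M f'(u)\varphi^2 |\nabla_g u|^2 \le \int_M |\nabla_g(\varphi|\nabla_g u|)|^2.$$
On the other hand, multiply the Bochner identity by $\varphi^2$ and integrate. Using $\frac{1}{2}\Delta_g |\nabla_g u|^2 = |\nabla_g u|\Delta_g|\nabla_g u| + |\nabla_g |\nabla_g u||^2$ (where $|\nabla_g u| > 0$), and integrating by parts, one derives
$$\int_M \varphi^2\big(|H_u|^2 - |\nabla_g|\nabla_g u||^2 + \text{Ric}_g(\nabla_g u,\nabla_g u)\big) \le \int_M |\nabla_g u|^2 |\nabla_g\varphi|^2.$$
Here the term $|H_u|^2 - |\nabla_g|\nabla_g u||^2 \ge 0$ by Kato's inequality (the refined version). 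Combined with $\text{Ric}_g \ge 0$, the left side is nonnegative.

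Now choose $\varphi = \varphi_\epsilon$: in the compact case, $\varphi \equiv 1$ and there's no boundary term at all; in the parabolic case, we use the parabolicity cutoffs so that $\int_M |\nabla\varphi_\epsilon|^2 \to 0$, and since $|\nabla_g u| \in L^\infty$, the right-hand side $\int_M |\nabla_g u|^2|\nabla_g\varphi_\epsilon|^2 \to 0$. This forces $\int_M \varphi^2\big(|H_u|^2 - |\nabla_g|\nabla_g u||^2 + \text{Ric}_g(\nabla_g u,\nabla_g u)\big) = 0$, so in particular $\text{Ric}_g(\nabla_g u, \nabla_g u) \equiv 0$ everywhere on $M$.

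Now one must argue that $\nabla_g u \equiv 0$. Suppose not: then $\nabla_g u(p) \ne 0$ at some point $p$; by continuity it's nonzero on an open set $\Omega$, and on $\Omega$ we would also get $|H_u|^2 = |\nabla_g|\nabla_g u||^2$, i.e. equality in Kato, which forces $u$ to have a very rigid structure ($\nabla_g u$ is, up to scalar, a parallel-like field along its flow; actually equality in the refined Kato inequality means $H_u = \lambda\, du \otimes du / |\nabla_g u|^2 $ plus something — the level sets are totally geodesic and $|\nabla_g u|$ is constant on them). More importantly $\text{Ric}_g(\nabla_g u, \nabla_g u) = 0$ on $\Omega$. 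But we need to contradict "$\text{Ric}_g$ does not vanish identically." The point: once $\text{Ric}_g(\nabla_g u,\nabla_g u) \equiv 0$ on all of $M$, and if additionally the rigidity from Kato's equality propagates (so $|\nabla_g u|$ is bounded below on the whole manifold or the set where it vanishes is negligible), one uses the structure to show $\text{Ric}_g$ vanishes in more directions. The anticipated main obstacle is precisely this last step: upgrading "$\text{Ric}_g(\nabla_g u,\nabla_g u)\equiv 0$ and Kato equality" to "$\text{Ric}_g \equiv 0$," contradicting the hypothesis. I expect the argument to show that if $\nabla_g u$ is not identically zero, the splitting / rigidity (locally $M$ looks like a product $\mathbb{R}\times N$ with $u$ depending only on the $\mathbb{R}$ factor) forces $\text{Ric}_g \equiv 0$ along the orbit, and then a connectedness/unique-continuation argument extends this to all of $M$; contradiction. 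Hence $\nabla_g u \equiv 0$ and $u$ is constant since $M$ is connected.
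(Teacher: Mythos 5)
Your derivation of the weighted Poincar\'e inequality is exactly the paper's route (Theorem~\ref{SZ}): test the stability inequality with $\xi=\phi\,|\nabla_g u|$, substitute the Bochner--Weitzenb\"ock identity together with the differentiated equation $\nabla_g\Delta_g u=-f'(u)\nabla_g u$, and integrate by parts. The refined Kato inequality $|H_u|^2\ge\big|\nabla_g|\nabla_gu|\big|^2$ and the choice of cutoffs ($\phi\equiv 1$ when $M$ is compact; the parabolicity cutoffs $\phi_\epsilon$ combined with $|\nabla_gu|\in L^\infty(M)$ otherwise) are also precisely the paper's Lemmas. So up to the conclusion $\Ric_g(\nabla_gu,\nabla_gu)\equiv 0$ on $M$ you have reproduced the argument faithfully.

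The gap is the final step, which you yourself flag as ``the anticipated main obstacle'' and leave unresolved. Your plan --- assume $\nabla_gu\not\equiv0$, invoke the equality case of Kato to get a local splitting, and upgrade $\Ric_g(\nabla_gu,\nabla_gu)\equiv0$ to $\Ric_g\equiv0$ for a contradiction --- would not work as described: the vanishing of $\Ric_g$ in the single direction $\nabla_gu$, even combined with the Kato rigidity, says nothing about the Ricci curvature in the directions tangent to the level sets (on a Riemannian product the Ricci curvature can be strictly positive transversally to the factor on which $u$ depends, while $\Ric_g(\nabla_gu,\nabla_gu)=0$). The paper's Lemma~\ref{Fi1} argues in the opposite direction: from ``$\Ric_g\ge0$ and not identically zero'' it extracts an open subset of $M$ on which $\Ric_g$ is positive definite, so that $\Ric_g(\nabla_gu,\nabla_gu)=0$ forces $\nabla_gu=0$ there; then $u$ is constant on that open set, and the unique continuation principle for $-\Delta_gu=f(u)$ (Theorem~1.8 of the Kazdan reference cited in the paper) propagates constancy to all of the connected manifold $M$. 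That pointwise positivity statement plus unique continuation is the missing ingredient in your proposal, and it is quite different from the global rigidity/splitting argument you sketch.
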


Note that the conclusion of Theorem \ref{T1} is sharp. Indeed,  $\mathbb R^2$ endowed with its usual flat metric is parabolic (with identically zero Ricci tensor). The function 
$$u(x_1,x_2)=\tanh \left(\frac{x_1}{\sqrt{2}}\right) $$
is a stable non-constant solution of the two-dimensional
Allen-Cahn equation, namely
$$-\Delta u= u-u^3.$$

The previous example motivates the following result, which provides a rigidity property for stable solutions of \eqref{PDE} when $n=2$. 

\begin{theorem}\label{T3}
Let $M$ be a complete, connected Riemannian surface
(that is, a complete, connected
Riemannian manifold with~$\dim M=2$).

Let $u$ be a stable 
solution of 
\eqref{PDE}, with $|\nabla_g u|\in L^\infty(M)$. 
Assume also that $\Ric_g$ vanishes identically. Then, any
connected component of the 
level set of $u$ on which $\nabla_g u$ does not vanish is a geodesic. 
\end{theorem}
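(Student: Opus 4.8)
Write $v:=|\nabla_g u|$. The plan is to extract from stability, via the Bochner formula, the weighted Poincar\'e inequality on which this paper rests, then specialize it to $\Ric_g\equiv0$. Testing \eqref{STA} with $\xi=\phi v$ for $\phi\in C^\infty_0(M)$, expanding $|\nabla_g(\phi v)|^2$, integrating the cross term by parts via \eqref{DIV}, and using that the two resulting terms involving $|\nabla_g v|^2$ cancel, the stability inequality \eqref{STA} becomes
\begin{equation}\label{pw1}
\int_M\phi^2\big(v\,\Delta_g v+f'(u)\,v^2\big)\,dV_g\ \le\ \int_M v^2\,|\nabla_g\phi|^2\,dV_g .
\end{equation}
On the other hand, from \eqref{PDE} we have $\langle\nabla_g\Delta_g u,\nabla_g u\rangle=-f'(u)v^2$, so \eqref{BOC} together with $\tfrac12\Delta_g(v^2)=v\,\Delta_g v+|\nabla_g v|^2$ gives the identity $v\,\Delta_g v+f'(u)v^2=|H_u|^2-|\nabla_g v|^2+\Ric_g(\nabla_g u,\nabla_g u)$. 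Using $\Ric_g\equiv0$ and substituting into \eqref{pw1},
\begin{equation}\label{pw2}
\int_M\phi^2\big(|H_u|^2-|\nabla_g v|^2\big)\,dV_g\ \le\ \int_M v^2\,|\nabla_g\phi|^2\,dV_g\qquad\text{for every }\phi\in C^\infty_0(M).
\end{equation}

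Next I would use a pointwise identity on the open set $\{v>0\}$, where every level set of $u$ is a $C^2$ curve. At $p$ with $\nabla_g u(p)\ne0$, pick a local orthonormal frame with $e_2=\nabla_g u/v$, so that $e_1$ is tangent to the level curve $\gamma$ of $u$ through $p$. Differentiating $\langle\nabla_g u,e_1\rangle\equiv0$ along $\gamma$ gives $(H_u)_{11}=-v\,\kappa$, where $\kappa$ is the geodesic curvature of $\gamma$, while one checks directly that $(H_u)_{12}=\langle\nabla_g v,e_1\rangle$ and $(H_u)_{22}=\langle\nabla_g v,e_2\rangle$. Since $|\nabla_g v|^2=\langle\nabla_g v,e_1\rangle^2+\langle\nabla_g v,e_2\rangle^2$ we obtain
\begin{equation}\label{pw3}
|H_u|^2-|\nabla_g v|^2\ =\ v^2\kappa^2+\langle\nabla_g v,e_1\rangle^2\ \ge\ v^2\kappa^2\qquad\text{on }\{v>0\},
\end{equation}
whereas on $\{v=0\}$ the left-hand side of \eqref{pw3} is a.e.\ nonnegative by the Kato inequality $|\nabla_g v|\le|H_u|$. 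Hence \eqref{pw2} and \eqref{pw3} combine into
\begin{equation}\label{pw4}
\int_{\{v>0\}}\phi^2\,v^2\kappa^2\,dV_g\ \le\ \int_M v^2\,|\nabla_g\phi|^2\,dV_g\qquad\text{for every }\phi\in C^\infty_0(M).
\end{equation}

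To conclude I would exploit that $\dim M=2$ and $\Ric_g\equiv0\ge0$. By the Bishop--Gromov inequality, ${\rm vol}\,B_R(p_o)\le\pi R^2$ for all $R>0$, so the standard logarithmic cut-offs $\phi_R\in C^\infty_0(M)$ --- equal to $1$ on $B_R(p_o)$, to $0$ outside $B_{R^2}(p_o)$, and affine in $\log d(\cdot,p_o)$ on the annulus in between --- satisfy $\int_M|\nabla_g\phi_R|^2\,dV_g\le C/\log R$. Since $v=|\nabla_g u|\in L^\infty(M)$, inserting $\phi=\phi_R$ in \eqref{pw4} gives $\int_{\{v>0\}}\phi_R^2\,v^2\kappa^2\,dV_g\le \|v\|_{L^\infty(M)}^2\,C/\log R$; letting $R\to\infty$ and using Fatou's lemma (note $\phi_R\to1$ pointwise) forces $v^2\kappa^2=0$ a.e.\ on $\{v>0\}$, hence $\kappa\equiv0$ on $\{v>0\}$ by continuity. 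Thus any connected component of a level set of $u$ along which $\nabla_g u$ never vanishes has identically vanishing geodesic curvature, that is, it is a geodesic.

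The genuinely delicate step is the derivation of \eqref{pw2} near the critical set $\{\nabla_g u=0\}$: there $v$ is only Lipschitz, so \eqref{BOC} is not available pointwise and the passage from \eqref{STA} to \eqref{pw2} must be justified by approximation --- for instance by first running the computation with $v$ replaced by $\sqrt{v^2+\varepsilon^2}$ and letting $\varepsilon\to0^+$, or by noting that $v^2=|\nabla_g u|^2$ is in fact $C^2$ by \eqref{BOC} and elliptic regularity, or by localizing away from the critical set. This is exactly the weighted Poincar\'e inequality established earlier in the paper, so in practice I would simply invoke it. The remaining ingredients --- the frame computation \eqref{pw3} and the two-dimensional logarithmic cut-off estimate --- are routine.
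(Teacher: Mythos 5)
Your proof is correct, and although it is anchored on the same weighted Poincar\'e inequality as the paper (your inequality (pw2) is exactly \eqref{GF} specialized to $\Ric_g\equiv0$), the way you pass from that inequality to the geodesic conclusion is genuinely different. The paper cites Huber's theorem to get parabolicity of a complete surface with nonnegative Gaussian curvature, feeds this into Lemma~\ref{Dop} to force equality in the Kato-type inequality \eqref{POS}, and then exploits the resulting structural condition \eqref{79} (each $\nabla_g(\nabla_g u)^k$ parallel to $\nabla_g u$) in a normal-coordinate computation showing that $\ddot\gamma(t_o)$ is orthogonal to both $\dot\gamma(t_o)$ and $\nabla u(p_o)$, hence zero. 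You instead make the geodesic curvature of the level curves appear explicitly in the integrand, via the two-dimensional Sternberg--Zumbrun decomposition $|H_u|^2-\big|\nabla_g|\nabla_g u|\big|^2=|\nabla_g u|^2\kappa^2+\langle\nabla_g|\nabla_g u|,e_1\rangle^2$ on $\{\nabla_g u\ne0\}$, and you replace the appeal to Huber by an explicit logarithmic cutoff whose Dirichlet energy is controlled through the Bishop--Gromov bound ${\mathcal{V}}_R\le\pi R^2$. Each route buys something: yours is self-contained (parabolicity is in effect reproved rather than cited) and identifies geometrically what the nonnegative excess in Kato's inequality measures, namely the curvature of the level sets plus the tangential derivative of $|\nabla_g u|$; the paper's isolates the equality case of \eqref{POS} in lemmata that are reused verbatim for Theorems~\ref{T1}, \ref{ADDT1} and~\ref{ADDT2}. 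Two minor points: your cutoffs are only Lipschitz (as are the paper's $\tau_R$), so they must be inserted into \eqref{GF} by density; and the frame identities $(H_u)_{2i}=\langle\nabla_g|\nabla_g u|,e_i\rangle$ hold only on the open set $\{\nabla_g u\ne0\}$, which is indeed where you use them, the critical set being handled by \eqref{POS} exactly as you note.
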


Of course, as well known,
in dimension~$n=2$, Ricci flat surfaces
are just surfaces with
zero Gaussian curvature, thence, in Theorem \ref{T3},
the assumption that~$\Ric_g$ vanishes identically
may be equivalently stated by requiring the Gaussian curvature
to vanish identically.

Also, Theorem \ref{T3} does not hold in high dimensions $n \geq 9$, as 
shown in 
\cite{PKW} for the Allen-Cahn equation in $\mathbb R^n $
endowed with its standard flat metric. More precisely, in $\mathbb 
R^9$ (with flat metric), one can construct monotone (hence stable,
see Corollary~4.3 in~\cite{AAC})
solutions whose level sets are 
not totally geodesic. 

This latter fact suggests that the parabolicity assumption in Theorem \ref{T3} (which is hidden in the two-dimensional character of $M$) seems to be necessary to obtain rigidity results on stable solutions of equation \eqref{PDE}. 

The proofs of our main results are based
on a geometric formula, which 
will be given in Theorem~\ref{SZ}
below, and which can be considered as a 
weighted Poincar\'e inequality. 

The use of such a type of formula in the 
Euclidean setting was started
in~\cite{SZ1, SZ2} and its importance for 
symmetry results was
explained in~\cite{FAR-H}. Further applications to 
PDEs have been given in~\cite{FSV, SV, FER}.

We now give two additional results in the spirit
of Theorem~\ref{T1},
under a sign assumption
on the nonlinearity and on the growth of the volume of the
geodesic balls.

For this, we denote~${\mathcal{B}}_R$
the (open)
geodesic ball of radius~$R> 0$, centered at
a given point of~$M$.

We denote by~${\mathcal{V}}_R$
the volume of~${\mathcal{B}}_R$,
computed with respect
to the volume element~$dV_g$ in~\eqref{vg}.

We obtain the 
following
results:

\begin{theorem}\label{ADDT1}
Let~$M$ be a complete, connected
Riemannian manifold
and let~$u$ be a bounded stable solution of 
\eqref{PDE}.

Suppose that
\begin{equation}\label{Si}
{\mbox{ $f(r)\ge 0$ for any~$r\in \R$}}
\end{equation}
and that \begin{equation}\label{vag}
\liminf_{R\rightarrow+\infty}
R^{-4} {\mathcal{V}}_{R}=0.
\end{equation}

Assume also
that the Ricci curvature of~$M$
is nonnegative
and that $\Ric_g$ does not vanish identically.

Then $u$ is constant.
\end{theorem}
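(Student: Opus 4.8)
The plan is to combine the weighted Poincaré inequality of Theorem~\ref{SZ} with the sign condition \eqref{Si} and the volume growth bound \eqref{vag} to first force $\nabla_g u$ to vanish identically, and then to use the nondegeneracy of the Ricci tensor exactly as in Theorem~\ref{T1}. First I would recall that, since $u$ is a bounded stable solution, Theorem~\ref{SZ} provides the inequality
$$\int_M \big(|H_u|^2+\Ric_g(\nabla_g u,\nabla_g u)\big)\varphi^2\,dV_g \le \int_M |\nabla_g u|^2\,|\nabla_g \varphi|^2\,dV_g$$
(possibly with the extra gradient-of-the-norm term, depending on the exact statement of Theorem~\ref{SZ}) for every $\varphi\in C^\infty_0(M)$. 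The idea is to choose a standard logarithmic cutoff: fix a point, and for $R$ large set $\varphi=1$ on $\mathcal B_{\sqrt R}$, $\varphi=0$ outside $\mathcal B_R$, interpolating so that $|\nabla_g\varphi|\le C/(r\log R)$ on the annulus $\mathcal B_R\setminus\mathcal B_{\sqrt R}$. Then $\int_M |\nabla_g\varphi|^2\,dV_g$ is controlled by $(\log R)^{-2}$ times a sum of terms $\mathcal V_{2^{-k}R}/(2^{-k}R)^2$ over dyadic scales; using \eqref{vag} along a suitable subsequence $R_j\to\infty$, this tends to $0$. Since $|\nabla_g u|\in L^\infty(M)$ — which here I would deduce from boundedness of $u$, the sign condition on $f$, and elliptic estimates, exactly as one expects $u$ bounded $\Rightarrow$ $\nabla_g u$ bounded via local gradient bounds — the right-hand side $\int |\nabla_g u|^2|\nabla_g\varphi|^2$ also tends to $0$ along $R_j$.

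The key step is then the following: letting $R_j\to\infty$ in the displayed inequality, the left-hand integrand, which is nonnegative because $\Ric_g\ge 0$, integrates to $0$ over all of $M$ (monotone convergence, since $\varphi^2\uparrow 1$). Hence $|H_u|^2+\Ric_g(\nabla_g u,\nabla_g u)\equiv 0$ on $M$, so both terms vanish: the Hessian $H_u$ vanishes identically, and $\Ric_g(\nabla_g u,\nabla_g u)\equiv 0$. In particular $|\nabla_g u|$ is constant on $M$ (since $\nabla_g|\nabla_g u|^2 = 2 H_u(\nabla_g u,\cdot)=0$), say $|\nabla_g u|\equiv c\ge 0$.

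To conclude, suppose for contradiction $c>0$. Then the unit vector field $e=\nabla_g u/c$ is globally defined and parallel ($\nabla e = 0$, since $H_u=0$), which forces $\Ric_g(e,e)=0$ everywhere; more importantly, a parallel vector field locally splits $M$ as a Riemannian product with an $\R$-factor along $e$, and iterating on the orthogonal complement, together with the fact that the relevant curvature directions are killed, one shows $\Ric_g$ must vanish identically — contradicting the hypothesis that $\Ric_g$ does not vanish identically. (Alternatively, and more cleanly, I would argue directly: $H_u\equiv 0$ and $-\Delta_g u=f(u)$ give $f(u)=-\Delta_g u=-\mathrm{tr}\,H_u=0$, so $u$ is a harmonic function with parallel gradient; feeding $\Delta_g u=0$ and $H_u=0$ back into the Bochner formula \eqref{BOC} gives $\Ric_g(\nabla_g u,\nabla_g u)=0$, and then the parallel-splitting argument shows the ambient Ricci curvature vanishes in every direction, again a contradiction.) Therefore $c=0$, i.e. $\nabla_g u\equiv 0$, and since $M$ is connected, $u$ is constant.

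The main obstacle I anticipate is making the volume-growth step fully rigorous: one must be careful that $R^{-4}$ (rather than $R^{-2}$) appears in \eqref{vag}, which is the hypothesis tailored to the logarithmic-cutoff estimate after summing over dyadic annuli — so the cutoff must be chosen so that the contribution of each annulus is weighted to produce exactly a fourth power of the radius, and one should check that passing to the $\liminf$ subsequence is compatible with the monotone convergence argument on the left. The second delicate point is the final rigidity step (ruling out $c>0$): the cleanest route is the Bochner identity plus the observation $f(u)\equiv 0$, but one should verify that a globally parallel nonvanishing vector field on a connected complete manifold indeed forces enough vanishing of $\Ric_g$ to contradict the hypothesis — this is where the de Rham-type local splitting, or a direct computation of sectional curvatures in the direction of $e$, enters.
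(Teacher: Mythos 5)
There is a genuine gap at the central step of your argument. You try to make the right-hand side of \eqref{GF} small by choosing a logarithmic cutoff $\varphi$ with $\int_M|\nabla_g\varphi|^2\,dV_g\to0$ and bounding $|\nabla_g u|$ in $L^\infty$. The existence of such cutoffs is precisely parabolicity of $M$, and it does \emph{not} follow from \eqref{vag}: flat $\R^3$ satisfies $R^{-4}{\mathcal{V}}_R\to0$ but is not parabolic, so no sequence of compactly supported functions equal to $1$ on larger and larger balls can have gradient energy tending to $0$. Concretely, with ${\mathcal{V}}_r\sim r^{3}$ your dyadic sum $(\log R)^{-2}\sum_k {\mathcal{V}}_{2^{-k}R}\,(2^{-k}R)^{-2}$ is of order $R/(\log R)^2\to\infty$. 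You correctly sensed the mismatch between the $R^{-2}$ a cutoff produces and the $R^{-4}$ in \eqref{vag}, but the resolution is not a cleverer cutoff. The missing idea is a Caccioppoli-type energy estimate that uses \eqref{Si} and the boundedness of $u$ \emph{before} invoking stability: testing the equation against $(u-m_+)\tau_R^2$, where $m_+=\sup_M u$, and using $f(u)(u-m_+)\le 0$ yields
$$\int_{{\mathcal{B}}_R}|\nabla_g u|^2\,dV_g\le \frac{C}{R^2}\,{\mathcal{V}}_{2R},$$
and only then does one estimate the right-hand side of \eqref{GF} with the standard cutoff $\tau_R$ (whose gradient is $O(1/R)$ and supported in ${\mathcal{B}}_{2R}$), obtaining $\int_M|\nabla_gu|^2|\nabla_g\tau_R|^2\,dV_g\le C'R^{-4}{\mathcal{V}}_{4R}$, which tends to $0$ along the subsequence provided by \eqref{vag}. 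This is where the sign of $f$ and the $L^\infty$ bound on $u$ are actually used; they are not there to produce a gradient bound (which would anyway require uniform control of the local geometry and is not needed).

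A second, lesser problem is your endgame. The correct form of \eqref{GF} contains $|H_u|^2-\big|\nabla_g|\nabla_gu|\big|^2$, which is nonnegative but whose vanishing does \emph{not} imply $H_u\equiv0$; so the parallel vector field / de Rham splitting argument starts from a premise you cannot establish, and the claim that $|\nabla_g u|$ is constant falls with it. All that the limiting argument gives is $\Ric_g(\nabla_gu,\nabla_gu)\equiv0$ on $M$, and one then concludes as in Theorem~\ref{T1}: since $\Ric_g\ge0$ and does not vanish identically, it is positive definite on some open set, forcing $\nabla_gu=0$ there, so $u$ is constant there and hence, by unique continuation, on all of the connected manifold $M$.
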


\begin{theorem}\label{ADDT2}
Let~$M$ be a complete, connected
Riemannian manifold
and let~$u$ be a stable solution of~\eqref{PDE}.

Suppose that
\begin{equation}\label{33C}
\liminf_{R\rightarrow+\infty} R^{-2}{\mathcal{V}}_R
\Big( \sup_{
{\mathcal{B}}_R
} |\nabla_g u|\Big)^2 =0.
\end{equation}

Assume also
that the Ricci curvature of~$M$
is nonnegative
and that $\Ric_g$ does not vanish identically.

Then~$u$ is constant.
\end{theorem}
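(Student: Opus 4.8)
The plan is to combine the weighted Poincar\'e inequality of Theorem~\ref{SZ} with a suitable family of cut-off functions, and then to conclude as in the proof of Theorem~\ref{T1}. I will use Theorem~\ref{SZ} in the form: for a stable solution~$u$ of~\eqref{PDE} and every Lipschitz, compactly supported $\varphi:M\to\R$,
\[
\int_M \Big( |H_u|^2 - \big|\nabla_g|\nabla_g u|\big|^2 + \Ric_g(\nabla_g u,\nabla_g u)\Big)\,\varphi^2\,dV_g \;\le\; \int_M |\nabla_g u|^2\,|\nabla_g\varphi|^2\,dV_g .
\]
Fixing a base point $p_o\in M$, I would take $\varphi_R$ equal to $1$ on $\mathcal B_{R/2}$, equal to $0$ outside $\mathcal B_R$, and Lipschitz with $|\nabla_g\varphi_R|\le 2/R$ (for instance a function of the Riemannian distance to $p_o$). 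Then the right-hand side above is controlled by
\[
\frac{4}{R^{2}}\int_{\mathcal B_R}|\nabla_g u|^2\,dV_g \;\le\; \frac{4}{R^{2}}\,\mathcal V_R\Big(\sup_{\mathcal B_R}|\nabla_g u|\Big)^2 ,
\]
which, by assumption~\eqref{33C}, tends to~$0$ along a sequence $R_j\to+\infty$.

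The decisive point is that the integrand on the left-hand side is pointwise nonnegative: the curvature term $\Ric_g(\nabla_g u,\nabla_g u)$ is $\ge 0$ because $\Ric_g\ge 0$, while $|H_u|^2 - \big|\nabla_g|\nabla_g u|\big|^2\ge 0$ by the Kato inequality $\big|\nabla_g|\nabla_g u|\big|\le |H_u|$, which holds almost everywhere and, in particular, at every point of the open set $\Omega:=\{x\in M:\ \nabla_g u(x)\ne 0\}$, where $|\nabla_g u|$ is smooth. Letting $R_j\to+\infty$ and applying Fatou's lemma (note that $\varphi_{R_j}\to 1$ pointwise on $M$), we obtain that the integral over all of $M$ of this nonnegative quantity is $0$, hence the quantity itself vanishes identically. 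This yields the pointwise rigidity $\Ric_g(\nabla_g u,\nabla_g u)\equiv 0$ on $M$ and $|H_u|^2=\big|\nabla_g|\nabla_g u|\big|^2$ on $\Omega$.

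It then remains to upgrade this rigidity to the statement that $u$ is constant; this is exactly the step carried out in the proof of Theorem~\ref{T1}, and it is here that the hypothesis ``$\Ric_g$ does not vanish identically'' is used. Arguing by contradiction, assume $u$ is nonconstant, so $\Omega\ne\emptyset$. Equality in the Kato inequality on $\Omega$ forces $H_u$ to be rank one and aligned with $\nabla_g u$ there, namely $H_u=\Delta_g u\,\nu\otimes\nu$ with $\nu:=\nabla_g u/|\nabla_g u|$; differentiating $\nu=\nabla_g u/|\nabla_g u|$ and inserting this structure shows $\nabla_g\nu\equiv 0$ on $\Omega$, i.e.\ $\nu$ is a parallel unit field there (equivalently, the level sets of $u$ in $\Omega$ are totally geodesic and $|\nabla_g u|$ is constant on each of their connected components). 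Combining the parallelism of $\nu$ and the identity $\Ric_g(\nabla_g u,\nabla_g u)\equiv 0$ with the nonnegativity and the non-triviality of $\Ric_g$ --- and, if needed, invoking a strong unique continuation argument for~\eqref{PDE} to propagate ``$\nabla_g u\equiv 0$'' from a suitable open set to the whole connected manifold~$M$ --- then gives the contradiction, so $\nabla_g u\equiv 0$ and $u$ is constant. I expect this last passage, turning the pointwise geometric identities into the global Liouville conclusion while genuinely exploiting $\Ric_g\not\equiv0$, to be the main obstacle; everything preceding it is a routine cut-off argument once Theorem~\ref{SZ} is in hand.
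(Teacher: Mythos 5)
Your proposal is correct and follows essentially the same route as the paper: a cutoff at scale $R$ plugged into the weighted Poincar\'e inequality of Theorem~\ref{SZ}, whose right-hand side is killed by~\eqref{33C} along a sequence $R_j\to+\infty$, then pointwise nonnegativity of the integrand (via~\eqref{POS} and $\Ric_g\ge0$) to conclude $\Ric_g(\nabla_gu,\nabla_gu)\equiv0$, and finally unique continuation (the paper's Lemma~\ref{Fi1}). The only difference is that your closing paragraph detours through the equality case of Kato's inequality and the parallelism of $\nu$, which is not needed: exactly as in Lemma~\ref{Fi1}, the hypotheses $\Ric_g\ge0$ and $\Ric_g\not\equiv0$ yield an open set on which $\Ric_g$ is positive definite, so $\nabla_gu=0$ there, and the unique continuation principle for~\eqref{PDE} gives that $u$ is constant on all of $M$.
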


We recall that, for
complete, connected, $n$-dimensional
Riemannian manifolds with nonnegative Ricci curvature,
one controls~${\mathcal{V}}_R$ with~$R^{n}$ (see~\cite{Bishop}).
Therefore,~\eqref{vag} always holds when~$n\le 3$.

The paper is organized as follows. In~\S~\ref{SS1},
we make an observation about the positivity of
an interesting geometric quantity.
In~\S~\ref{SS2} we discuss
the weighted Poincar\'e inequality which will 
be the keystone
of the techniques presented here. {F}rom that,
useful flatness results are obtained in~\S~\ref{SS3}.

The proofs of the main results are given in~\S~\ref{SS4}--\ref{SS7}.

\section{A Preliminary result}\label{SS1}

{F}rom now on,~$M$ will always denote
a complete, connected
Riemannian manifold.

\begin{lemma}
For any smooth $\phi:M\rightarrow\R$, we have that
\begin{equation}\label{POS}
|H_\phi|^2\ge\big|\nabla_g|\nabla_g \phi|\big|^2\qquad
{\mbox{
almost everywhere.}}
\end{equation}

Also, equality holds at $p\in M\cap \{ \nabla_g \phi
\ne0\}$ if and 
only if
for any $k=1,\dots,n$ there exists $\kappa^k:M\rightarrow
\R$
such that
\begin{equation}\label{67}
\nabla_g\big( \nabla_g \phi\big)^k(p)=
\kappa^k(p) \nabla_g \phi(p).
\end{equation}
\end{lemma}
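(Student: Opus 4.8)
The plan is to work in normal coordinates at an arbitrary point $p$ where $\nabla_g\phi(p)\neq 0$ (the inequality \eqref{POS} is trivial and both sides vanish a.e.\ on the interior of $\{\nabla_g\phi=0\}$, and everything is a pointwise algebraic statement, so it suffices to treat such $p$). Using \eqref{NC}, at $p$ the Christoffel symbols vanish, so $(H_\phi)_{ij}(p)=\partial^2_{ij}\phi(p)$; moreover the metric is Euclidean to first order at $p$, so $|\nabla_g\phi|^2=\sum_k(\partial_k\phi)^2$ near $p$ and the covariant derivatives of the components agree with ordinary partials \emph{at} $p$. Writing $v=\nabla_g\phi(p)\in\R^n$, $v_k=\partial_k\phi(p)$, and $A=H_\phi(p)$ (a symmetric $n\times n$ matrix), we have $|H_\phi|^2(p)=\sum_{i,j}A_{ij}^2=|A|^2$ (Frobenius norm). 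The key step is to compute $\big|\nabla_g|\nabla_g\phi|\big|^2$ at $p$: differentiating $|\nabla_g\phi|^2$ gives $\partial_i|\nabla_g\phi|^2=2\sum_k(\partial_k\phi)(\partial^2_{ik}\phi)$, hence at $p$, $\partial_i|\nabla_g\phi|=\frac{1}{|v|}\sum_k v_k A_{ik}=\frac{1}{|v|}(Av)_i$, so that
\[
\big|\nabla_g|\nabla_g\phi|\big|^2(p)=\frac{|Av|^2}{|v|^2}.
\]
(One must note $|\nabla_g\phi|$ is smooth near $p$ since $v\neq 0$.)

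Thus the claimed inequality \eqref{POS} at $p$ reduces to the purely linear-algebraic statement
\[
|A|^2\ \ge\ \frac{|Av|^2}{|v|^2}\qquad\text{for symmetric }A\text{ and }v\neq 0,
\]
which is immediate: letting $\hat v=v/|v|$, the right-hand side is $|A\hat v|^2=\sum_i(\sum_j A_{ij}\hat v_j)^2$, i.e.\ the sum of squared entries of the single column vector $A\hat v$, while $|A|^2=\|AP_{\hat v}\|^2+\|AP_{\hat v^\perp}\|^2\ge \|AP_{\hat v}\|^2=|A\hat v|^2$ where $P_{\hat v}$ is orthogonal projection onto $\R\hat v$. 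Equivalently, extend $\hat v$ to an orthonormal basis $\hat v=e_1,e_2,\dots,e_n$ and write $|A|^2=\sum_{j=1}^n|Ae_j|^2\ge |Ae_1|^2=|A\hat v|^2$.

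For the equality case: equality holds at $p$ iff $\|AP_{\hat v^\perp}\|=0$, i.e.\ $Ae_j=0$ for every $e_j\perp v$, $j=2,\dots,n$. Since $A$ is symmetric, this says that $v^\perp\subseteq\ker A$, equivalently $\operatorname{ran}A\subseteq\R v$, equivalently every column of $A$ is a scalar multiple of $v$: for each $k$ there is $\kappa^k(p)\in\R$ with $(Ae_k)=A_{\cdot k}=\kappa^k(p)\,v$. Translating back through the identifications above, $A_{\cdot k}=H_\phi(p)\,e_k$ has entries $(H_\phi)_{ik}(p)=\partial^2_{ik}\phi(p)=\partial_i(\partial_k\phi)(p)$, which at $p$ (normal coordinates) equals the $i$-th component of the covariant derivative $\nabla_g(\nabla_g\phi)^k(p)$; hence the condition is exactly $\nabla_g(\nabla_g\phi)^k(p)=\kappa^k(p)\nabla_g\phi(p)$, which is \eqref{67}. (Symmetry of $A$ makes ``rows are multiples of $v$'' and ``columns are multiples of $v$'' equivalent, so there is no asymmetry in the statement.)

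The only genuine care needed — the ``main obstacle,'' such as it is — is the bookkeeping of what is coordinate-invariant: \eqref{POS} and \eqref{67} are tensorial/coordinate-free, while the clean computation of $\partial_i|\nabla_g\phi|$ and the identity $(H_\phi)_{ij}=\partial^2_{ij}\phi$ use that we are \emph{at} the center $p$ of normal coordinates; one should not differentiate the relations \eqref{NC} a second time. Apart from that, and the routine remark that on the interior of $\{\nabla_g\phi=0\}$ both sides of \eqref{POS} vanish while the boundary of this set is null, the proof is the elementary linear algebra above.
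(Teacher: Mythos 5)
Your treatment of the set $\{\nabla_g\phi\ne0\}$ is correct and is essentially the paper's argument recast in matrix form: normal coordinates at $p$ reduce \eqref{POS} to $|A|^2\ge |A\hat v|^2$ for the symmetric matrix $A=H_\phi(p)$ and the unit vector $\hat v=\nabla\phi(p)/|\nabla\phi(p)|$, and the equality case to $\mathrm{ran}\,A\subseteq\R v$, which by symmetry of $A$ is the same as the paper's condition that each $\nabla(\partial_k\phi)(p)$ be parallel to $\nabla\phi(p)$, i.e.\ \eqref{67}. (The paper obtains the inequality by applying Cauchy--Schwarz row by row rather than by expanding the Frobenius norm in an orthonormal basis adapted to $\hat v$; the two are interchangeable.)

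There is, however, a genuine gap in your disposal of the degenerate set: you assert that the boundary of $\{\nabla_g\phi=0\}$ is null, and that is false in general. Already on $\R$ one can take a fat Cantor set $C$ of positive measure and a smooth $h\ge0$ with $\{h=0\}=C$; for $\phi(x)=\int_0^x h$ the set $\{\phi'=0\}=C$ has empty interior and positive measure, so its boundary is all of $C$ and your argument says nothing on a set of positive measure. Note also that on $\{\nabla_g\phi=0\}$ the function $|\nabla_g\phi|$ is in general only Lipschitz, so $\nabla_g|\nabla_g\phi|$ must be read as the a.e.\ defined (Rademacher) differential, and the real point is that this differential vanishes a.e.\ on $\{|\nabla_g\phi|=0\}$: the paper invokes Stampacchia's theorem for this (equivalently, $|\nabla_g\phi|\ge0$ attains a minimum at every point of its zero set, so wherever it is differentiable there its gradient vanishes). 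With that substitution the ``almost everywhere'' in \eqref{POS} is fully justified; as written, your proof establishes it only on $\{\nabla_g\phi\ne0\}$ together with the interior of its complement.
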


\begin{proof}
{F}rom 
Stampacchia's Theorem (see, for instance, Theorem~6.19
in~\cite{LOSS}), we know that~$\nabla_g |\nabla_g \phi|=0$
on~$\{ \nabla_g \phi=0\}$ up to a null-measure set.

Therefore, we can now concentrate on points in~$M\cap
\{\nabla_g
\phi\ne 0\}$.

Fix $p\in M
\cap\{\nabla_g
\phi\ne0\}
$, with $\nabla_g\phi(p)\ne0$.
Recalling \eqref{NC},
we use normal coordinates at $p$. Therefore~$(H_\phi)_{ij}(p)=
\partial^2_{ij}\phi(p)$ and so
$$ |H_\phi|^2 (p)= \sum_{1\le i,j\le n}
\Big( 
\partial^2_{ij}\phi(p)\Big).$$
Analogously, we have 
$$ |\nabla_g\psi\big|(p)=|\nabla\psi(p)|,$$
for any $\psi:M\rightarrow \R$ smooth in the vicinity
of~$p$. As a consequence,  
taking $\psi=|\nabla_g \phi|$, one gets 
\begin{eqnarray*}
&& \big|\nabla_g|\nabla_g \phi|\big|(p)=
\big|\nabla |\nabla_g \phi|\big|(p)\\ &&\qquad=
\left|\frac{\nabla_g \phi}{|\nabla_g \phi|}
\cdot \nabla(\nabla_g \phi)
\right|(p)
=\left|\frac{\nabla \phi}{|\nabla \phi|}
\cdot \nabla(\nabla_g \phi)
\right|(p)
.\end{eqnarray*}
Since, by~\eqref{NC},
$$ \partial_i \big( \nabla_g \phi)^h(p)=
\partial_i\big( g^{hk} \partial_k\phi)(p)=
\delta^{hk} \partial^2_{ik}\phi(p),$$
we thus obtain
$$\nabla\phi\cdot\nabla\big( \nabla_g \phi)^h(p)=
\sum_{1\le i\le n}
\partial_i \phi \partial^2_{ih}\phi(p).$$
Accordingly,
\begin{eqnarray*}
&& \big|\nabla_g|\nabla_g \phi|\big|^2(p)=
\frac{1}{|\nabla \phi|^2}
\sum_{1\le h\le n}\Big(
\partial_i \phi\partial^2_{ih}\phi
\Big)^2
=\frac{1}{|\nabla \phi|^2}
\sum_{1\le h\le n}\Big(
\nabla \phi\cdot \nabla (\partial_h \phi)
\Big)^2\\&&\qquad
\le
\sum_{1\le h\le n}\Big|
\nabla (\partial_h \phi)
\Big|^2=|H_\phi|^2 (p),
\end{eqnarray*}
with equality if and only if
$\nabla\phi$ and $\nabla(\partial_k\phi)$
are parallel, for any $k=1,\dots n$.
This gives the desired result.
\end{proof}

\section{A geometric inequality}\label{SS2}

\begin{theorem}\label{SZ}
Let $u$ be a stable solution of~\eqref{PDE}.
Then,
\begin{equation}\label{GF}
\int_M \Big(
\Ric_g (\nabla_gu,\nabla_gu)+|H_u|^2-\big|\nabla_g|\nabla_g u|\big|^2
\Big)\phi^2\,dV_g\le
\int_M |\nabla_g u|^2 |\nabla_g\phi|^2\,dV_g,\end{equation}
for any $\phi\in C^\infty_0(M)$.
\end{theorem}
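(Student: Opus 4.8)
The plan is to test the stability inequality \eqref{STA} with the function $\xi = |\nabla_g u|\,\phi$ for $\phi \in C^\infty_0(M)$, and then use the Bochner--Weitzenb\"ock formula \eqref{BOC} together with the PDE \eqref{PDE} to convert the resulting expression into the geometric quantities appearing in \eqref{GF}. First I would observe that $\xi = |\nabla_g u|\,\phi$ is a legitimate (Lipschitz, compactly supported) test function, at least away from the zero set of $\nabla_g u$, where $|\nabla_g u|$ is smooth; the behaviour on $\{\nabla_g u = 0\}$ is handled by the Stampacchia-type argument already used in the previous lemma, so that $\nabla_g|\nabla_g u| = 0$ a.e.\ there. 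Expanding, $|\nabla_g \xi|^2 = |\nabla_g u|^2|\nabla_g\phi|^2 + 2|\nabla_g u|\,\phi\,\langle \nabla_g|\nabla_g u|, \nabla_g\phi\rangle + \phi^2\big|\nabla_g|\nabla_g u|\big|^2$, so that \eqref{STA} becomes
\begin{equation*}
\int_M |\nabla_g u|^2|\nabla_g\phi|^2 + 2|\nabla_g u|\,\phi\,\langle \nabla_g|\nabla_g u|, \nabla_g\phi\rangle + \phi^2\big|\nabla_g|\nabla_g u|\big|^2 - f'(u)|\nabla_g u|^2\phi^2 \, dV_g \ge 0.
\end{equation*}

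Next, I would produce a companion identity by multiplying the Bochner formula \eqref{BOC} by $\phi^2$ and integrating. Using \eqref{PDE}, we have $\langle \nabla_g\Delta_g u, \nabla_g u\rangle = -\langle \nabla_g f(u), \nabla_g u\rangle = -f'(u)|\nabla_g u|^2$, so \eqref{BOC} reads $\tfrac12\Delta_g|\nabla_g u|^2 = |H_u|^2 - f'(u)|\nabla_g u|^2 + \Ric_g(\nabla_g u, \nabla_g u)$. Integrating against $\phi^2$ and using \eqref{DIV} to move the Laplacian onto $\phi^2$ (legitimate since $\phi^2$ is compactly supported and $|\nabla_g u|^2$ is $C^1$), one gets
\begin{equation*}
-\int_M \langle \nabla_g|\nabla_g u|^2, \nabla_g(\phi^2)\rangle \, dV_g = 2\int_M \Big( |H_u|^2 - f'(u)|\nabla_g u|^2 + \Ric_g(\nabla_g u,\nabla_g u)\Big)\phi^2 \, dV_g.
\end{equation*}
Since $\nabla_g|\nabla_g u|^2 = 2|\nabla_g u|\,\nabla_g|\nabla_g u|$ (again valid a.e., off the zero set) and $\nabla_g(\phi^2) = 2\phi\,\nabla_g\phi$, the left-hand side is $-4\int_M |\nabla_g u|\,\phi\,\langle\nabla_g|\nabla_g u|, \nabla_g\phi\rangle\,dV_g$. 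This gives an exact expression for the cross term $\int_M |\nabla_g u|\,\phi\,\langle\nabla_g|\nabla_g u|,\nabla_g\phi\rangle\,dV_g$ in terms of $\int_M(|H_u|^2 - f'(u)|\nabla_g u|^2 + \Ric_g(\nabla_g u,\nabla_g u))\phi^2\,dV_g$.

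Substituting this into the expanded stability inequality, the $f'(u)|\nabla_g u|^2\phi^2$ terms and the cross terms recombine, and after collecting, the inequality \eqref{STA} becomes exactly
\begin{equation*}
\int_M |\nabla_g u|^2|\nabla_g\phi|^2\,dV_g \ge \int_M \Big(\Ric_g(\nabla_g u,\nabla_g u) + |H_u|^2 - \big|\nabla_g|\nabla_g u|\big|^2\Big)\phi^2\,dV_g,
\end{equation*}
which is \eqref{GF}. The main obstacle I anticipate is the justification of all these manipulations across the set $\{\nabla_g u = 0\}$, where $|\nabla_g u|$ fails to be smooth: one must argue that $|\nabla_g u|\phi$ is an admissible test function in \eqref{STA} (using the remark after \eqref{DIV} on approximation by Lipschitz compactly supported functions), that $\nabla_g|\nabla_g u| = 0$ a.e.\ on the zero set (Stampacchia, as in the earlier lemma), and that the integration by parts producing the Bochner identity is valid in the weak/integrated sense — a standard but delicate density/cutoff argument, e.g.\ replacing $|\nabla_g u|$ by $\sqrt{|\nabla_g u|^2 + \varepsilon}$ and letting $\varepsilon \to 0^+$. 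Everything else is routine algebra.
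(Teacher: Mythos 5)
Your proposal is correct and follows essentially the same route as the paper: test \eqref{STA} with $\xi=|\nabla_g u|\,\phi$, integrate the cross term by parts via \eqref{DIV}, and eliminate $f'(u)|\nabla_g u|^2\phi^2$ using the Bochner--Weitzenb\"ock formula \eqref{BOC} together with the differentiated equation $-\nabla_g\Delta_g u=f'(u)\nabla_g u$. Your extra care about the admissibility of $|\nabla_g u|\phi$ and the behaviour on $\{\nabla_g u=0\}$ is a welcome elaboration of what the paper dispatches with the phrase ``in the light of a density argument.''
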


\begin{proof} We take~$\phi\in
C^\infty_0(M)$ and $\xi=|\nabla_g 
u|\phi$
in~\eqref{STA} (note that this choice is possible in the light of
a density argument). We thus obtain
\begin{eqnarray*}
\int_M f'(u)|\nabla_gu|^2 \phi^2\,dV_g&\le&
\int_M \big|\nabla_g |\nabla_g u|\big|^2 \phi^2
+|\nabla_gu|^2|\nabla_g \phi|^2 +2\phi|\nabla_gu|
\langle\nabla_g\phi,\nabla_g|\nabla_g u|\rangle\,dV_g\\
&=&
\int_M \big|\nabla_g |\nabla_g u|\big|^2 \phi^2
+|\nabla_gu|^2|\nabla_g \phi|^2 +\frac12
\langle\nabla_g\phi^2,\nabla_g |\nabla_gu|^2\rangle\,dV_g
.\end{eqnarray*}
Therefore, recalling \eqref{DIV} and~\eqref{BOC},
\begin{eqnarray*}
\int_M f'(u)|\nabla_gu|^2 \phi^2\,dV_g&\le&
\int_M \big|\nabla_g |\nabla_g u|\big|^2 \phi^2
+|\nabla_gu|^2|\nabla_g \phi|^2 -\frac12
\phi^2\Delta_g |\nabla_gu|^2\,dV_g\\
&=&
\int_M \big|\nabla_g |\nabla_g u|\big|^2 \phi^2
+|\nabla_gu|^2|\nabla_g \phi|^2 \\&&\quad-
\phi^2
\Big(
|H_u|^2+ \langle \nabla_g \Delta_g u,\nabla_gu\rangle
+\Ric_g (\nabla_g u,\nabla_gu)
\Big)
\,dV_g
.\end{eqnarray*}
Since, by differentiating~\eqref{PDE}, we have that
$$ -\nabla_g \Delta_g u=f'(u)\nabla_g u,$$
we obtain
\begin{eqnarray*}
0&\le&
\int_M \big|\nabla_g |\nabla_g u|\big|^2 \phi^2
+|\nabla_gu|^2|\nabla_g \phi|^2 -
\phi^2
\Big(
|H_u|^2+\Ric_g (\nabla_g u,\nabla_gu)
\Big)
\,dV_g
,\end{eqnarray*}
which gives~\eqref{GF}.
\end{proof}

\section{Flatness lemmata}\label{SS3}

\begin{lemma}\label{Pre}
Let~$u$ be a smooth function on~$M$.

Assume that
\begin{equation}\label{544}
{\mbox{the Ricci curvature is nonnegative}}.\end{equation}

Suppose also that for any~$p\in M$
there exists a neighborhood $V_p$
of $p$ in $M$ such that
\begin{equation}\label{VP} \int_{V_p}
\Big(
\Ric_g (\nabla_gu,\nabla_gu)+|H_u|^2-\big|\nabla_g|\nabla_g u|\big|^2
\Big)\,dV_g\le 0.\end{equation}

Then,
\begin{equation}\label{77}
|H_u|^2(p)=\big|\nabla_g|\nabla_g u|\big|^2(p)
\qquad{\mbox{
for any $p\in M\cap\{\nabla_g u\ne0\}$,}}
\end{equation}
and
\begin{equation}\label{78}
\Ric_g (\nabla_gu,\nabla_gu) (p)=0\qquad{\mbox{
for any $p\in M$.}}
\end{equation} Furthermore,
for any $k=1,\dots,n$ there exist $\kappa^k:M\rightarrow
\R$
such that
\begin{equation}\label{79}
\nabla_g\big( \nabla_g u\big)^k(p)=
\kappa^k(p) \nabla_g u(p)\qquad{\mbox{
for any $p\in M\cap \{ \nabla_gu\ne0\}$.}}
\end{equation}
\end{lemma}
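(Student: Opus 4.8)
The plan is to exploit that, under hypothesis~\eqref{544}, the integrand in~\eqref{VP} is a sum of two a.e.-nonnegative quantities. Indeed, nonnegativity of the Ricci curvature gives $\Ric_g(\nabla_g u,\nabla_g u)\ge 0$ everywhere, while the preliminary Lemma, namely inequality~\eqref{POS}, gives $|H_u|^2-\big|\nabla_g|\nabla_g u|\big|^2\ge 0$ almost everywhere. Hence the integrand in~\eqref{VP} is $\ge 0$ a.e.\ on $M$; for each $p$ its integral over $V_p$ is then at once $\le 0$ by assumption and $\ge 0$ by the above, so it vanishes a.e.\ on each $V_p$. Since $M$, being a manifold, is second countable, it is covered by countably many such neighborhoods, whence the integrand vanishes a.e.\ on all of $M$; being a sum of two a.e.-nonnegative terms, each summand vanishes a.e.

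Next I would promote these identities from ``almost everywhere'' to ``everywhere'' by continuity. The map $p\mapsto \Ric_g(\nabla_g u,\nabla_g u)(p)$ is continuous on $M$ (the metric, the Ricci tensor and $\nabla_g u$ are continuous), so vanishing a.e.\ forces it to vanish at every point, which is~\eqref{78}. On the open set $M\cap\{\nabla_g u\ne 0\}$ the function $|\nabla_g u|=\sqrt{\langle\nabla_g u,\nabla_g u\rangle}$ is smooth, hence $|H_u|^2-\big|\nabla_g|\nabla_g u|\big|^2$ is continuous there and, vanishing a.e., vanishes at every point of $M\cap\{\nabla_g u\ne 0\}$, which is~\eqref{77}. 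Finally,~\eqref{79} follows from the equality case in the preliminary Lemma: at each $p\in M\cap\{\nabla_g u\ne 0\}$ the identity~\eqref{77} just obtained is, by~\eqref{67}, equivalent to the existence for every $k=1,\dots,n$ of a number $\kappa^k(p)$ with $\nabla_g\big(\nabla_g u\big)^k(p)=\kappa^k(p)\nabla_g u(p)$; since $\nabla_g u(p)\ne 0$ this $\kappa^k(p)$ is uniquely determined, and extending $\kappa^k$ by, say, zero outside $\{\nabla_g u\ne 0\}$ gives functions on all of $M$ with the required property.

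I do not anticipate a real obstacle. The only steps needing a little care are the passage from ``null on each $V_p$'' to ``null on $M$'' (which rests on $M$ being second countable, equivalently on a locally null set being null) and the remark that, although $\big|\nabla_g|\nabla_g u|\big|$ is in general only Lipschitz, it is smooth on the open set $\{\nabla_g u\ne 0\}$, so the continuity argument leading to~\eqref{77} is legitimate there.
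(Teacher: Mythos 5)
Your proposal is correct and follows essentially the same route as the paper: split the integrand into the two a.e.-nonnegative pieces $\Ric_g(\nabla_gu,\nabla_gu)$ and $|H_u|^2-\big|\nabla_g|\nabla_g u|\big|^2$ via \eqref{544} and \eqref{POS}, deduce that each vanishes a.e., upgrade to everywhere by continuity (using smoothness of $|\nabla_g u|$ on $\{\nabla_g u\ne 0\}$ for the second piece), and obtain \eqref{79} from the equality case \eqref{67}. The paper argues pointwise at each fixed $p$ rather than globalizing via second countability, but this is an inessential difference.
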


\begin{proof}
We fix $p\in M$ and we show that
\eqref{78} holds at $p$, 
and that~\eqref{77} and \eqref{79} hold at $p$ too
if~$\nabla_g u(p)\ne0$.

{F}rom~\eqref{POS}, \eqref{544}
and~\eqref{VP}, we have that
$$
\int_{V_p}
\Ric_g (\nabla_gu,\nabla_gu)
\,dV_g =0=\int_{V_p}
|H_u|^2-\big|\nabla_g|\nabla_g 
u|\big|^2
\,dV_g.$$
Accordingly,
\begin{equation}\label{01} \Ric_g (\nabla_gu,\nabla_gu)
=0
=|H_u|^2-\big|\nabla_g|\nabla_g
u|\big|^2\quad
{\mbox{almost everywhere in $V_p$.}}\end{equation}
Since~$\Ric_g$ is continuous,~\eqref{01}
implies
that~$\Ric_g (\nabla_gu,\nabla_gu)=0$ everywhere
in~$V_p$ and so~\eqref{78} holds at~$p$.

In addition, if~$p\in \{\nabla_g u\ne0\}$,
we have that the map~$|H_u|^2-\big|\nabla_g|\nabla_g
u|\big|^2$ is continuous in the vicinity of~$p$,
and so~\eqref{01} says that~\eqref{77} holds at~$p$
in this case.

Finally, \eqref{77} and \eqref{67} give \eqref{79}.
\end{proof}

\begin{lemma}\label{Dop}
Let $u$ be a stable solution of~\eqref{PDE}
and let the Ricci curvature of~$M$ be nonnegative.
Suppose that
\begin{itemize}
\item either $M$
is compact 
\item or $M$ is complete and parabolic, and $|\nabla_gu|\in 
L^\infty(M)$.
\end{itemize}
Then,~\eqref{77}, \eqref{78} and~\eqref{79}
hold true.
\end{lemma}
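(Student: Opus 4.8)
The strategy is to reduce to Lemma~\ref{Pre}: it suffices to verify that the local integral hypothesis~\eqref{VP} holds at every point $p\in M$, since the nonnegativity of the Ricci curvature is assumed. By Theorem~\ref{SZ}, for any $\phi\in C^\infty_0(M)$ we have
\begin{equation*}
\int_M \Big(
\Ric_g (\nabla_gu,\nabla_gu)+|H_u|^2-\big|\nabla_g|\nabla_g u|\big|^2
\Big)\phi^2\,dV_g\le
\int_M |\nabla_g u|^2 |\nabla_g\phi|^2\,dV_g,
\end{equation*}
so the whole game is to choose a good family of test functions $\phi$ making the right-hand side small while $\phi^2$ converges to $1$ on a neighborhood of $p$. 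Once the right-hand side is driven to $0$ along such a family, Fatou's lemma (the integrand on the left is bounded below by $0$, thanks to~\eqref{POS} and the sign of $\Ric_g$) yields~\eqref{VP} on that neighborhood, and Lemma~\ref{Pre} then delivers~\eqref{77}, \eqref{78} and~\eqref{79}.

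I would split into the two cases. If $M$ is compact, simply take $\phi\equiv 1$ (which is in $C^\infty_0(M)$ since $M$ has no boundary and is compact); then $\nabla_g\phi=0$, the right-hand side vanishes identically, and~\eqref{VP} holds with $V_p=M$ for every $p$. If instead $M$ is complete and parabolic with $|\nabla_g u|\in L^\infty(M)$, fix $p\in M$ and let $U_p$ be the precompact neighborhood furnished by the definition of parabolicity~\eqref{PAR}. For each $\epsilon>0$ pick $\phi_\epsilon\in C^\infty_0(M)$ with $\phi_\epsilon\equiv 1$ on $U_p$ and $\int_M|\nabla_g\phi_\epsilon|^2\,dV_g\le\epsilon$. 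Plugging $\phi_\epsilon$ into~\eqref{GF} and using $|\nabla_g u|\le\|\nabla_g u\|_{L^\infty(M)}=:L$, the right-hand side is bounded by $L^2\int_M|\nabla_g\phi_\epsilon|^2\,dV_g\le L^2\epsilon$. Hence
\begin{equation*}
\int_{U_p} \Big(
\Ric_g (\nabla_gu,\nabla_gu)+|H_u|^2-\big|\nabla_g|\nabla_g u|\big|^2
\Big)\,dV_g
\le\int_M \Big(
\Ric_g (\nabla_gu,\nabla_gu)+|H_u|^2-\big|\nabla_g|\nabla_g u|\big|^2
\Big)\phi_\epsilon^2\,dV_g
\le L^2\epsilon,
\end{equation*}
where the first inequality uses that $\phi_\epsilon\equiv 1$ on $U_p$ and that the integrand is nonnegative. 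Letting $\epsilon\to 0$ gives~\eqref{VP} with $V_p=U_p$.

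The only mild subtlety — and the step I would be most careful about — is integrability: to write down the displayed chain one needs $\Ric_g(\nabla_g u,\nabla_g u)+|H_u|^2-\big|\nabla_g|\nabla_g u|\big|^2$ to be locally integrable, which it is since $u\in C^2(M)$ so the integrand is continuous on the precompact set $U_p$, and one needs the estimate on the right-hand side of~\eqref{GF} to be uniform, which is exactly where the hypothesis $|\nabla_g u|\in L^\infty(M)$ is used. With these observations in place the argument is essentially a direct application of Theorem~\ref{SZ} and Lemma~\ref{Pre}, so no serious obstacle remains.
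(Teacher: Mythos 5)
Your proposal is correct and follows essentially the same route as the paper: in the compact case take $\phi\equiv 1$ in~\eqref{GF}, and in the parabolic case plug the cutoffs $\phi_\epsilon$ from~\eqref{PAR} into~\eqref{GF}, use the pointwise nonnegativity of the integrand (via~\eqref{POS} and the sign of $\Ric_g$) to restrict to $U_p$, bound the right-hand side by $\|\nabla_g u\|_{L^\infty(M)}^2\epsilon$, and conclude via Lemma~\ref{Pre}. The invocation of Fatou's lemma is unnecessary (simple monotonicity of the integral of a nonnegative function suffices, which is what the paper uses), but this is only a cosmetic difference.
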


\begin{proof} 
We claim that there exists a neighborhood $V_p$
of $p$ in $M$ such that~\eqref{VP} holds.

Indeed,
if $M$ is compact
we can use Theorem~\ref{SZ}, by taking $\phi=1$ in~\eqref{GF},
obtaining
$$
\int_M 
\Ric_g (\nabla_gu,\nabla_gu)+|H_u|^2-\big|\nabla_g|\nabla_g u|\big|^2
\,dV_g\le
0.
$$
This gives \eqref{VP}, with $V_p=M$.

If, on the other hand, $M$ is parabolic and $|\nabla_gu|$ is bounded,
we fix $p\in M$, we recall Theorem~\ref{SZ} once more, we
take $\phi_\epsilon$ as in \eqref{PAR} and we plug it in 
\eqref{GF}: we recall~\eqref{POS}
and so we
obtain
\begin{eqnarray*}
&& \int_{U_p}
\Big(
\Ric_g (\nabla_gu,\nabla_gu)+|H_u|^2-\big|\nabla_g|\nabla_g u|\big|^2
\Big)\,dV_g
\\ &\le&
\int_M \Big(
\Ric_g (\nabla_gu,\nabla_gu)+|H_u|^2-\big|\nabla_g|\nabla_g u|\big|^2
\Big)\phi_\epsilon^2\,dV_g\\&\le&
\int_M |\nabla_g u|^2 |\nabla_g\phi_\epsilon|^2\,dV_g
\\ &\le& \|
\nabla_g u
\|_{L^\infty(M)}^2
\int_M  |\nabla_g\phi_\epsilon|^2\,dV_g
\\ &\le&\|
\nabla_g u
\|_{L^\infty(M)}^2 \epsilon.
\end{eqnarray*}
By taking $\epsilon$ arbitrarily small, we obtain~\eqref{VP},
with~$V_p=U_p$ in this case.

The desired result then follows from
Lemma~\ref{Pre}.
\end{proof}

\begin{lemma} \label{Fi1}
Suppose that the Ricci curvature of~$M$
is nonnegative and that~$\Ric_g$
does not vanish identically.

Let~$u$ be a solution
of~\eqref{PDE}, with
\begin{equation}\label{y7}{\mbox{$
\Ric_g (\nabla_gu,\nabla_gu) (p)=0$
for any $p\in M$.}}\end{equation}

Then,~$u$ is constant.
\end{lemma}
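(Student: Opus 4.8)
The plan is to exploit the hypothesis \eqref{y7} by testing it against a cleverly chosen vector field, namely $\nabla_g u$ itself, and then use the fact that $\Ric_g$ is a nonnegative-definite symmetric tensor. Since $\Ric_g \ge 0$ as a quadratic form, the vanishing $\Ric_g(\nabla_g u, \nabla_g u)(p) = 0$ does not merely say that one scalar quantity is zero; it forces $\nabla_g u(p)$ to lie in the kernel of $\Ric_g$ at each point $p$. Indeed, for a nonnegative symmetric bilinear form $B$ one has $B(v,v) = 0 \iff B(v,\cdot) \equiv 0$, by Cauchy--Schwarz applied to the positive-semidefinite form $B$. Hence \eqref{y7} upgrades to $\Ric_g(\nabla_g u, X)(p) = 0$ for every tangent vector $X$ at $p$, i.e. $\Ric_g \cdot \nabla_g u \equiv 0$ as a vector field on $M$.

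First I would argue by contradiction: suppose $u$ is not constant, so the open set $\Omega := \{\nabla_g u \ne 0\}$ is nonempty. On $\Omega$ we have just shown that $\nabla_g u(p)$ is a nontrivial element of $\ker \Ric_g(p)$ for every $p \in \Omega$; in particular $\Ric_g(p)$ is a \emph{degenerate} quadratic form at every point of $\Omega$, so it has a nontrivial null direction there. This does not yet contradict $\Ric_g \not\equiv 0$, because the Ricci tensor could still be nonzero in directions transverse to $\nabla_g u$ on $\Omega$, and it could be supported on $M \setminus \Omega$. So the contradiction must come from analyzing how $\Ric_g(\nabla_g u, \cdot) \equiv 0$ propagates.

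The key step is to bring in a contracted second Bianchi identity (the differential relation $\div_g \Ric_g = \tfrac12 \nabla_g S$, where $S$ is the scalar curvature) together with the equation \eqref{PDE}. Taking the divergence of the identity $\Ric_g(\nabla_g u, \cdot) = 0$ and expanding, one gets a relation involving $\Ric_g(H_u, \cdot)$, the contraction $\langle \Ric_g, H_u\rangle$, and the term $(\div_g \Ric_g)(\nabla_g u) = \tfrac12 \langle \nabla_g S, \nabla_g u\rangle$. One then also uses $\Delta_g u = -f(u)$ and the Bochner formula \eqref{BOC} restricted to the set where things are nonzero. Alternatively — and I suspect this is the cleaner route the authors take — on the set $\Omega$ one combines \eqref{y7} with the flatness information \eqref{79} (if available) to show the metric is locally a product with a flat factor in the $\nabla_g u$ direction, forcing $\Ric_g \equiv 0$ near every point of $\Omega$; then a unique-continuation / connectedness argument spreads $\Ric_g \equiv 0$ from $\Omega$ to all of $M$, contradicting the hypothesis that $\Ric_g$ does not vanish identically.

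\textbf{The main obstacle} I anticipate is the propagation step: knowing $\Ric_g(\nabla_g u, \cdot) \equiv 0$ on $M$ and $\Ric_g \ge 0$, one must deduce $\Ric_g \equiv 0$ \emph{everywhere}, not just on $\Omega$. The difficulty is that $\Omega$ may be a proper subset of $M$, and a priori $\Ric_g$ could be nonzero precisely on $M \setminus \Omega = \{\nabla_g u \equiv 0\}$, where the hypothesis \eqref{y7} is vacuous. Overcoming this requires either showing $\Omega$ is dense (e.g. $M \setminus \Omega$ has empty interior, which would follow if $u$ were real-analytic, or from a unique continuation principle for the elliptic equation \eqref{PDE}) or showing that the curvature condition itself forces $\nabla_g u$ never to vanish once it is nonzero somewhere — a Bochner-type argument. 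I would invest the bulk of the effort there, handling the easy case $M = \Omega$ first and then addressing the boundary $\partial\Omega$ via continuity of $\Ric_g$ and an appropriate continuation argument.
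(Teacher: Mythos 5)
There is a genuine gap: your write-up is a plan whose decisive step is never carried out, and the step you defer to is in fact the wrong one. You aim to derive a contradiction by showing that $\Ric_g(\nabla_g u,\cdot)\equiv 0$ propagates to $\Ric_g\equiv 0$ on all of $M$; but neither of the two routes you sketch is executed, and neither is viable as stated. The contracted Bianchi identity route is pure speculation, and the ``local product structure'' route relies on \eqref{79}, which is \emph{not} available in Lemma~\ref{Fi1}: this lemma assumes only that $u$ solves \eqref{PDE} and satisfies \eqref{y7} --- no stability, parabolicity or compactness --- so the flatness lemmata that produce \eqref{79} cannot be invoked. Moreover, the target of your contradiction is misplaced: the hypotheses do not force $\Ric_g$ to vanish anywhere except in the single direction $\nabla_g u$, and there is no unique continuation principle for the Ricci tensor that would let you spread vanishing of $\Ric_g$ from $\Omega$ to $M$. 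The correct object to which unique continuation applies is $u$ itself, as a solution of the elliptic equation \eqref{PDE}.

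The paper's proof runs in the opposite direction and is essentially two lines. Since $\Ric_g\ge 0$ and $\Ric_g\not\equiv 0$, the authors note that $\Ric_g$ is positive definite on some nonempty open set $U\subset M$; on $U$, hypothesis \eqref{y7} then forces $\nabla_g u=0$, so $u$ is constant on $U$; finally the Unique Continuation Principle for \eqref{PDE} (Theorem~1.8 of \cite{Kazdan}) propagates the constancy of $u$ --- not of $\Ric_g$ --- to all of the connected manifold $M$. Your opening observation (that $B(v,v)=0$ with $B\ge 0$ implies $B(v,\cdot)=0$, by Cauchy--Schwarz) is correct but is not needed for this argument. To repair your proposal you would have to replace the entire ``propagation of $\Ric_g\equiv 0$'' program by the observation that positivity of $\Ric_g$ on an open set kills $\nabla_g u$ there, and then apply unique continuation to $u$.
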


\begin{proof}
Since $\Ric_g$ is nonnegative definite and it does not vanish 
identically, 
we have that~$\Ric_g$ is positive definite
in a suitable open subset of~$M$.

Consequently,~\eqref{y7} implies 
that $\nabla_gu(p)=0$ for $p$ in a suitable open subset of~$M$.

Thence, $u$ is constant on such a subset. By
Unique Continuation
Principle (see Theorem 1.8 of \cite{Kazdan}), we have
that~$u$ is constant on~$M$.
\end{proof}

\section{Proof of Theorem \ref{T1}}\label{SS4}

{F}rom Lemma~\ref{Dop},
we have that~\eqref{78} holds true.

This makes it possible to use Lemma~\ref{Fi1}
and thus complete the proof of Theorem~\ref{T1}.

\section{Proof of Theorem \ref{T3}}

First of all, we observe that $M$ has nonnegative Gaussian curvature,
since it has nonnegative Ricci curvature and $\dim M=2$.

Therefore, from\footnote{We remark that we
are using here in a crucial way
the fact that $M$ has nonnegative Gauss 
curvature
to obtain \eqref{pro 79},
since there are examples of hyperbolic Riemannian surfaces (or, even,
hyperbolic two-dimensional graphs): see
\cite{Osserman1, Osserman2, Milnor}.} 
Theorem~15 of \cite{Huber}
(see also~\cite{P1,P2}), we get that
\begin{equation}\label{pro 79}{\mbox{
$M$ is parabolic.}}
\end{equation}
Take any connected component~${\mathcal{C}}$ of~$\{u=c\}\cap
\{ \nabla_gu\ne0\}$. Then,~${\mathcal{C}}$ is a 
smooth curve.

Thus, we take $\gamma:\R\rightarrow M$ to be~$
{\mathcal{C}}$ traveled with unit 
speed (with respect to the metric $g$),
that is
\begin{equation}\label{N1}
|\dot \gamma|^2 =1.\end{equation}
With this notation, Theorem \ref{T3} is proved once
we show that
\begin{equation}\label{GEO}
\ddot\gamma^k
+ \Gamma^{k}_{ij}\dot\gamma^i
\dot\gamma^j = 0. \end{equation}
To prove \eqref{GEO}, we take any $t_o\in \R$ and we show
that~\eqref{GEO}
holds at $t_o$. For this, we choose a normal 
coordinate
frame at $p_o=\gamma(t_o)$.

Then, from \eqref{N1},
\begin{eqnarray*} 0&=& \frac12\,
\frac{d}{dt}\Big(g_{ij}(\gamma(t)) \dot\gamma^i(t)
\dot\gamma^j(t)\Big)\\&=&
\frac12 \partial_k g_{ij} (\gamma(t))\dot\gamma^k(t)
\dot\gamma^i(t)
\dot\gamma^j(t)
+ g_{ij}(\gamma(t))\dot\gamma^i(t)
\ddot\gamma^j(t).
\end{eqnarray*}
Consequently, from \eqref{NC}, we have 
\begin{equation}\label{3322}
0= \dot \gamma(t_o)\cdot \ddot\gamma(t_o).
\end{equation}
Moreover, since $u(\gamma(t))=c$, we also have
\begin{equation}\label{1167}
0=\frac{d}{dt} u(\gamma(t))=
\partial_i u(\gamma(t)) \dot\gamma^i(t).
\end{equation}
By differentiating \eqref{1167}
once more time, one gets
\begin{equation}\label{5611}
0=\frac{d}{dt}\Big(
\partial_i u(\gamma(t)) \dot\gamma^i(t)\Big)\\
=
\partial_{ij} u(\gamma(t)) \dot\gamma^i(t)\dot\gamma^j(t)
+\partial_i u(\gamma(t)) \ddot\gamma^i(t).
\end{equation}
We now observe that \eqref{pro 79} and Lemma~\ref{Dop}
make
it possible to use \eqref{79} here. 

Accordingly, from \eqref{79} 
and \eqref{NC}
we obtain, for any $j=1,\dots,n$,
$$
\partial_j  \nabla u(p_o)=
\kappa_j(p_o) \nabla u(p_o),$$
for some $\kappa_j(p_o)\in\R$.

This and \eqref{5611} give that
\begin{eqnarray*}
0&=&
\partial_{ij} u(p_o) \dot\gamma^i(t_o)\dot\gamma^j(t_o)
+\partial_i u(p_o) \ddot\gamma^i(t_o)\\&=&
\kappa_j(p_o) \partial_i u(p_o)
\dot\gamma^i(t_o)\dot\gamma^j(t_o)
+\partial_i u(p_o) \ddot\gamma^i(t_o)
\\ &=&
\Big(\kappa_j(p_o) \dot\gamma^j(t_o)\Big)
\,\Big(\partial_i u(p_o)
\dot\gamma^i (t_o)\Big)
+\partial_i u(p_o) \ddot\gamma^i(t_o)
.\end{eqnarray*}
Hence, employing \eqref{1167},
\begin{equation}\label{ER2}
0 =\partial_i u(p_o) \ddot\gamma^i(t_o).
\end{equation}
By \eqref{3322} and \eqref{ER2},
we see that $\ddot \gamma (t_o)$ is orthogonal
(in the Euclidean sense) 
both to $\dot\gamma(t_o)$,
which is tangent to
$\{u=c\}$ at $p_o$, and
to $\nabla u(p_o)$, which is normal to
$\{u=c\}$ at $p_o$.

Therefore, $\ddot\gamma(t_o)=0$.

As a consequence, from \eqref{NC},
$$
\ddot\gamma^k(t_o)
+ \Gamma^{k}_{ij}(p_o) \dot\gamma^i(t_o)
\dot\gamma^j (t_o)
=\ddot\gamma^k(t_o)+0=0
.$$
This proves \eqref{GEO} at the generic time $t=t_o$
and it thus completes the proof
of Theorem \ref{T3}.

\section{A useful cutoff}

For the proof of Theorems~\ref{ADDT1} and~\ref{ADDT2},
it is useful to introduce the following cutoff function.

Let~$d_g$ be the geodesic distance. Then~${\mathcal{B}}_R=
\{ p\in M {\mbox{
s.t.
}} d_g(p) < R\}$.

Fix~$\tau\in C^\infty_0 ([-2,2],
[0,1])$ with~$\tau(t)=1$
for any~$t\in [-1,1]$.

Given~$R>0$, for any~$p\in M$, we define
\begin{equation}\label{deca0}
\tau_R (p)=\tau \left(\frac{d_g (p)}{R}\right).
\end{equation}
Then,\begin{equation}\begin{split}\label{deca}
&{\mbox{
$\tau_R(p)=1$ for any $p\in{\mathcal{B}}_R$,
$\tau_R(p)=0$ for any $p\in M\setminus
{\mathcal{B}}_{2R}$,
and}}\\&\qquad |\nabla_g \tau_R (p)|\le
\frac{C_o}{R} \chi_{
{\mathcal{B}}_{2R}\setminus
{\mathcal{B}}_R
}(p)\qquad
{\mbox{
for any
$p\in M$.
}}\end{split}
\end{equation}

Then, we have:

\begin{lemma}\label{zac}
Suppose that the Ricci curvature of~$M$
is nonnegative and that~$\Ric_g$
does not vanish identically.

Let~$u$ be a stable
solution
of~\eqref{PDE} such that
\begin{equation}\label{8kkj}
\liminf_{R\rightarrow+\infty}\int_M
|\nabla_gu|^2|\nabla_g\tau_R|^2\,dV_g=0.
\end{equation}
Then $u$ is constant.
\end{lemma}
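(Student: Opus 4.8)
The plan is to feed the cutoff function $\tau_R$ from \eqref{deca0} into the geometric inequality of Theorem~\ref{SZ}, then pass to the limit $R\to+\infty$ along a subsequence realizing the $\liminf$ in \eqref{8kkj}, in complete analogy with the parabolic case of Lemma~\ref{Dop}. First I would fix $p\in M$. Since $\tau_R\in C^\infty_0(M)$ (it is smooth and compactly supported in ${\mathcal{B}}_{2R}$, and Lipschitz in any case, so a density argument applies as in the proof of Theorem~\ref{SZ}), plugging $\phi=\tau_R$ into \eqref{GF} gives
\begin{equation*}
\int_M \Big(\Ric_g(\nabla_gu,\nabla_gu)+|H_u|^2-\big|\nabla_g|\nabla_g u|\big|^2\Big)\tau_R^2\,dV_g\le\int_M|\nabla_gu|^2|\nabla_g\tau_R|^2\,dV_g.
\end{equation*}
By \eqref{POS} and \eqref{544} the integrand on the left is nonnegative almost everywhere, and since $\tau_R\equiv1$ on ${\mathcal{B}}_R$, the left-hand side is bounded below by the same integral taken over ${\mathcal{B}}_R$; in particular, taking $R$ large enough that ${\mathcal{B}}_R\supseteq V_p$ for a fixed precompact neighborhood $V_p$ of $p$, we get
\begin{equation*}
\int_{V_p}\Big(\Ric_g(\nabla_gu,\nabla_gu)+|H_u|^2-\big|\nabla_g|\nabla_g u|\big|^2\Big)\,dV_g\le\int_M|\nabla_gu|^2|\nabla_g\tau_R|^2\,dV_g.
\end{equation*}

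Next I would take the limit inferior as $R\to+\infty$. The left-hand side does not depend on $R$, and by hypothesis \eqref{8kkj} the right-hand side has $\liminf$ equal to $0$; hence the left-hand side is $\le 0$, which is exactly condition \eqref{VP} of Lemma~\ref{Pre} with this choice of $V_p$. Since $p\in M$ was arbitrary and $M$ has nonnegative Ricci curvature (so \eqref{544} holds), Lemma~\ref{Pre} applies and yields \eqref{77}, \eqref{78}, \eqref{79}; in particular \eqref{78} says $\Ric_g(\nabla_gu,\nabla_gu)(p)=0$ for every $p\in M$. Finally, since $\Ric_g$ is nonnegative and does not vanish identically, Lemma~\ref{Fi1} (whose hypothesis \eqref{y7} is precisely \eqref{78}) forces $u$ to be constant, completing the proof.

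The only genuine subtlety — and the one point deserving care rather than being routine — is the justification that $\tau_R$ is an admissible test function in \eqref{GF} and \eqref{STA}. Since $d_g$ is only Lipschitz (not smooth, because of the cut locus), $\tau_R$ as defined is Lipschitz with compact support rather than $C^\infty_0$; one invokes the density/approximation remark made after \eqref{DIV} in the paper, mollifying $\tau_R$ in the metric sense, to see that \eqref{GF} persists for such $\phi$ with the bound \eqref{deca} on $|\nabla_g\tau_R|$ intact. Everything else is a direct transcription of the parabolic argument in Lemma~\ref{Dop}, with the abstract parabolicity estimate \eqref{PAR} replaced by the explicit quantitative decay hypothesis \eqref{8kkj}.
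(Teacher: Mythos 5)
Your proof is correct and follows essentially the same route as the paper: plug $\tau_R$ into the inequality of Theorem~\ref{SZ}, use \eqref{POS} and the nonnegativity of $\Ric_g$ to pass to the limit via \eqref{8kkj}, obtain \eqref{VP} (the paper takes $V_p=M$ directly, you take a precompact $V_p$ and $R$ large, an immaterial difference), and conclude with Lemmata~\ref{Pre} and~\ref{Fi1}. Your remark on the admissibility of the merely Lipschitz $\tau_R$ is a legitimate point the paper handles implicitly via the density remark after \eqref{DIV}.
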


\begin{proof} {F}rom~\eqref{POS},
\eqref{deca}, \eqref{8kkj} and
Theorem~\ref{SZ},\begin{eqnarray*}
&&\int_M \Big(
\Ric_g (\nabla_gu,\nabla_gu)+|H_u|^2-\big|\nabla_g|\nabla_g
u|\big|^2\Big)\,dV_g \\&=&
\liminf_{R\rightarrow+\infty}
\int_{\mathcal{B}_R}\Big(
\Ric_g (\nabla_gu,\nabla_gu)+|H_u|^2-\big|\nabla_g|\nabla_g
u|\big|^2\Big)\,dV_g\\ &\le&
\liminf_{R\rightarrow+\infty}
\int_{M}\Big(
\Ric_g (\nabla_gu,\nabla_gu)+|H_u|^2-\big|\nabla_g|\nabla_g
u|\big|^2\Big)\tau_R^2 \,dV_g\\
&\le&\liminf_{R\rightarrow+\infty}
\int_M |\nabla_g u|^2 |\nabla_g\tau_R|^2\,dV_g
\\&=&0.\end{eqnarray*}
Hence,~\eqref{VP} holds true with~$V_p=M$.

Therefore, by Lemma~\ref{Pre},~$\Ric_g(\nabla_gu,\nabla_gu)$
vanishes identically on~$M$.

Hence, the desired result follows from
Lemma~\ref{Fi1}.
\end{proof}

\section{Proof of Theorem~\ref{ADDT1}}

Let~$m_-$, $m_+\in\R$ be such that~$m_-\le u(p)\le m_+$
for any~$p\in M$. Let also~$\tau_R$
as in~\eqref{deca0}.

Making use of~\eqref{DIV}
and~\eqref{Si},
we see that
\begin{eqnarray*}
0 &\ge&\int_M f(u) (u-m_+) \tau_R^2 \,dV_g\\
&=& \int_M
\langle
\nabla_g u, \nabla_g \big( (u-m_+)\tau_R^2
\big)
\rangle
\,dV_g
\\ &=& 
\int_{\mathcal{B}_{2R}} |\nabla_g u|^2 \tau_R^2\,dV_g
+2
\int_{\mathcal{B}_{2R}} \langle \nabla_g u,\nabla_g
\tau_R\rangle \tau_R (u-m_+) \,dV_g
\\ &\ge&
\int_{\mathcal{B}_{2R}} |\nabla_g u|^2 \tau_R^2\,dV_g
- 2(m_+-m_-)
\int_{\mathcal{B}_{2R}} | \nabla_g u|\, |\nabla_g
\tau_R|\, \tau_R  \,dV_g.
\end{eqnarray*}
Therefore, by Cauchy-Schwarz inequality,
$$ 0\ge \frac12
\int_{\mathcal{B}_{2R}} |\nabla_g u|^2 \tau_R^2\,dV_g
- C_\star
\int_{\mathcal{B}_{2R}} |\nabla_g
\tau_R|^2  \,dV_g$$
for a suitable~$C_\star>0$, possibly depending on~$m_-$ and~$m_+$,
and so, recalling~\eqref{deca},
\begin{equation}\label{98990}
\begin{split}
&\int_{\mathcal{B}_{R}} |\nabla_g u|^2 \,dV_g\le
\int_{\mathcal{B}_{2R}} |\nabla_g u|^2 \tau_R^2\,dV_g
\\&\qquad
\le 2
C_\star
\int_{\mathcal{B}_{2R}} |\nabla_g
\tau_R|^2  \,dV_g
\le \frac{\bar C}{R^2} {\mathcal{V}}_{2R},
\end{split}\end{equation}
for some~$\bar C>0$ which does not depend on~$R$.

{F}rom~\eqref{vag}, \eqref{deca}
and~\eqref{98990},
we conclude that
$$ \liminf_{R\rightarrow+\infty}
\int_M |\nabla_g u|^2 |\nabla_g \tau_R|^2
\,dV_g \le 
\liminf_{R\rightarrow+\infty}
\frac{C_o^2}{R^2}
\int_{{\mathcal{B}}_{2R}}
|\nabla_g u|^2 
\,dV_g\le
\liminf_{R\rightarrow+\infty}
\frac{C_o^2 \bar C
}{ 4 R^4} {\mathcal{V}}_{4R}
=0.$$
Then, we use Lemma~\ref{zac} to
end the proof of Theorem~\ref{ADDT1}.

\section{Proof of Theorem~\ref{ADDT2}}\label{SS7}

We take~$\tau_R$ as in~\eqref{deca0}. Then, 
from~\eqref{33C} and~\eqref{deca},
\begin{eqnarray*}
\liminf_{R\rightarrow+\infty} 
\int_M |\nabla_g u|^2 |\nabla_g\tau_R|^2\,dV_g
&=&
\liminf_{R\rightarrow+\infty}
\int_{\mathcal{B}_{2R}} |\nabla_g u|^2 |\nabla_g\tau_R|^2\,dV_g
\\
&\le&\liminf_{R\rightarrow+\infty} 
\Big( \sup_{
{\mathcal{B}_{2R}}
} |\nabla_g u|\Big)^2 \,\frac{C_o^2}{R^2}
\, \int_{\mathcal{B}_{2R}}\,dV_g\\
&=&0.
\end{eqnarray*}
Then, the proof of Theorem~\ref{ADDT2}
is ended via Lemma~\ref{zac}.

\bibliographystyle{alpha}
\bibliography{bibman}

\end{document}